\documentclass[12pt]{article}

\usepackage{amsmath,amssymb,amsthm,amsfonts,latexsym,hyperref,enumerate}
\usepackage{xcolor}
\usepackage{graphicx}
\usepackage{enumitem}

\setlength{\textwidth}{6.3in}
\setlength{\textheight}{8.7in}
\setlength{\topmargin}{0pt}
\setlength{\headsep}{0pt}
\setlength{\headheight}{0pt}
\setlength{\oddsidemargin}{0pt}
\setlength{\evensidemargin}{0pt}

\bibliographystyle{plain}

\newtheorem{lemma}{Lemma}[section]
\newtheorem{theorem}[lemma]{Theorem}
\newtheorem{example}[lemma]{Example}

\newtheorem{definition}[lemma]{Definition}
\newtheorem{remark}[lemma]{Remark}

\begin{document}

\title{Constructing cospectral hypergraphs}

\author{
Aida Abiad
\thanks{\texttt{a.abiad.monge@tue.nl}, Department of Mathematics and Computer Science, Eindhoven University of Technology, The Netherlands}
\thanks{Department of Mathematics: Analysis, Logic and Discrete Mathematics, Ghent University, Belgium} 
\thanks{Department of Mathematics and Data Science, Vrije Universiteit Brussel, Belgium} 
\and
Antonina P. Khramova
\thanks{\texttt{a.khramova@tue.nl}, Department of Mathematics and Computer Science, Eindhoven University of Technology, The Netherlands} 
}

\date{}
\maketitle

\begin{abstract}
Spectral hypergraph theory mainly concerns using hypergraph spectra to obtain structural information about the given hypergraphs. The study of cospectral hypergraphs is important since it reveals which hypergraph properties cannot be deduced from their spectra. In this paper, we show a new method for constructing cospectral uniform hypergraphs using two well-known hypergraph representations: adjacency tensors and adjacency matrices.
\end{abstract}




\section{Introduction}

Spectral hypergraph theory seeks to deduce structural properties about the hypergraph using its spectra. This field has received a lot of attention in the last two decades, see for example \cite{C2012,CD2015,FL1996,LZ2017,R2002,SSP2022,S2013,SQH2015,W2022,XW2019,ZKSB2017}. The study of cospectral hypergraphs is important since it reveals
which hypergraph properties cannot be deduced from their spectra. While the construction of cospectral graphs has
been investigated extensively in the literature (see e.g. \cite{ah2012, gm1982,HH1988,QJW2020,s1973}), much less is known about the construction of cospectral hypergraphs. In this regard, Bu, Zhou, and Wei \cite{bzw2014} presented a switching method for contructing $E$-cospectral hypergraphs which is based on the widely used Godsil-McKay switching (GM-switching) for graphs \cite{gm1982}. Another extension of GM-switching was shown by Banerjee and Sarkar in \cite{SB2020} for a matrix representation of a hypergraph using a natural generalization of the adjacency matrix for simple graphs. 

In this paper, we show a new method for constructing $E$-cospectral hypergraphs (with respect to their $E$-characteristic polynomials). We also propose a new method for constructing uniform hypergraphs which are cospectral with respect to their adjacency matrices. Both of these methods are  based on the recently introduced graph switching by Wang, Qiu and Hu (WQH-switching) \cite{WQH2019, QJW2020}.

\section{Cospectral and $E$-cospectral hypergraphs using adjacency tensors}

For a positive integer $n$, let $[n] = \{1, \ldots , n\}$. An order $k$ dimension $n$ \emph{tensor} $\mathcal{A}=(a_{i_1\cdots i_k}) \in \mathbb{C}^{n\times \cdots \times n}$ is a multidimensional array with $n^k$ entries, where $i_j \in [n]$, $j=1,\ldots, k$. For example, in case $k=1$, $\mathcal{A}$ is a column vector of dimension $n$, and in case $k=2$, $\mathcal{A}$ is an $n\times n$ matrix.

The following tensor multiplication was introduced by Shao \cite{S2013} as a generalization of the matrix multiplication.

\begin{definition}\cite{S2013}\label{def:tensorproduct}
Let $\mathcal{A}$ and $\mathcal{B}$ be order $m\geq 2$ and order $k\geq 1$, dimension $n$ tensors, respectively. The product $\mathcal{A}\mathcal{B}$ is the following tensor $\mathcal{C}$ of order $(m-1)(k-1) +1$ and dimension $n$ with entries:
$$c_{i\,\alpha_1 \cdots  \alpha_{m-1}}=\sum_{i_2,\ldots, i_m\in [n]} a_{i\, i_2 \cdots  i_{m}} b_{i_2\,\alpha_{1}} \cdots b_{i_m\,\alpha_{m-1}},$$
where $i\in [n]$, $\alpha_1,\ldots,\alpha_{m-1} \in [n]^{k-1}$.
\end{definition}

In particular, according to \cite[Example 1.1]{S2013}, for an order $k\geq 2$ dimension $n$ tensor $\mathcal{A}$ and a vector $x = (x_1,\ldots, x_n)^\top$ we can derive that the product $\mathcal{A}x$ is a vector with $i$-th component calculated by
$$(\mathcal{A}x)_i=\sum_{i_2,\ldots, i_k\in [n]} a_{i\, i_2 \cdots  i_{k}} x_{i_2}\cdots x_{i_k}.$$

In 2005, Qi \cite{Q2005} and Lim \cite{L2005} independently introduced the concept of tensor eigenvalues with two different definitions. As we will see below, both of them generalize the notion of matrix eigenvalue in their own way. Since then, a vast number of authors have used such definitions to study spectral properties of hypergraphs \cite{C2012,CD2015,S2013, SQH2015,W2022,XW2019,ZKSB2017}. Next, we introduce the  definitions of characteristic and $E$-characteristic polynomials of a tensor.  

Let $\mathcal{A}$ be an order $k$ dimension $n$ tensor. A number $\lambda \in \mathbb{C}$ is called an \emph{eigenvalue} of $\mathcal{A}$ if there exists a nonzero vector $x \in \mathbb{C}^n$ such that $\mathcal{A}x = \lambda x^{[k-1]}$, where $x^{[k-1]}=(x_1^{k-1},\ldots, x_n^{k-1})^\top$. The \emph{determinant} of $\mathcal{A}$, denoted by $\text{det}(\mathcal{A})$, is the resultant of the system of polynomials $f_i(x_1,\ldots, x_n)=(\mathcal{A}x)_i$ for all $i\in[n]$. The \emph{characteristic polynomial} of $\mathcal{A}$ is defined as $\Phi_{\mathcal{A}}(\lambda)=\det(\lambda \mathcal{I}_n - \mathcal{A})$, where $\mathcal{I}_n$ is the \emph{unit tensor} of order $k$ and dimension $n$, \emph{i.e.} a tensor of elements $\delta_{i_1\cdots i_k}$ such that
$$\delta_{i_1\cdots i_k}=\begin{cases}
1, & i_1=i_2=\cdots =i_k,\\
0, & \text{otherwise}.
\end{cases}$$
It is known that the eigenvalues of $\mathcal{A}$ are exactly the roots of $\Phi_{\mathcal{A}}(\lambda)$ \cite{S2013}.

On the other hand, for an order $k\geq 2$ dimension $n$ tensor $\mathcal{A}$, a number $\lambda\in \mathbb{C}$ is called an $E$-\emph{eigenvalue} of $\mathcal{A}$ if there exists a nonzero vector $x\in\mathbb{C}^n$ such that $\mathcal{A}x = \lambda x$ and $x^\top x=1$. In \cite{Q2007},
the $E$-\emph{characteristic polynomial} of $\mathcal{A}$ is defined as

$$\phi_{\mathcal{A}}(\lambda)=\begin{cases}
\operatorname{Res}_x (\mathcal{A}x-\lambda(x^\top x)^{\frac{k-2}2}x), & k \text{ is even}, \\
\operatorname{Res}_{x,\beta} \left(\begin{smallmatrix}
\mathcal{A}x-\lambda\beta^{k-2}x \\
x^\top x -\beta^2
\end{smallmatrix}\right), & k \text{ is odd}, \\
\end{cases}$$
where ‘Res’ is the resultant of the system of polynomials. It is known that $E$-eigenvalues
of $\mathcal{A}$ are roots of $\phi_{\mathcal{A}}(\lambda)$ \cite{Q2007}. If $k=2$, then $\phi_{\mathcal{A}}(\lambda)=\Phi_{\mathcal{A}}(\lambda)$ is just the characteristic
polynomial of the square matrix $\mathcal{A}$.

A hypergraph $G=(V(G),E(G))$ is called $k$-\emph{uniform} if each edge of $G$ contains exactly $k$ distinct vertices. All hypergraphs in this paper are uniform and simple. The \emph{adjacency tensor} of $G$, denoted by $\mathcal{A}_G$, is an order $k$ dimension $|V(G)|$ tensor with entries \cite{C2012}:

$$a_{i_1\, i_2 \cdots  i_{k}}=\begin{cases}
\frac1{(k-1)!}, & \{i_1,i_2,\dots,i_k\}\in E(G), \\
0, & \text{otherwise.}
\end{cases}$$

We say that two $k$-uniform hypergraphs are \emph{cospectral} (\emph{$E$-cospectral}) if their adjacency tensors have the same characteristic polynomial (E-characteristic polynomial).

In order to state our main result in this section, we need some preliminary work.
We shall follow the same notation as in \cite{bzw2014}.

Let $G=(V(G), E(G))$ be a $k$-uniform hypergraph. The \emph{degree} of a vertex $u\in V(G)$ is the number of edges that contain $u$.
For any edge $\{u_1, \dots, u_k\} \in E(G)$, we say that $u_1$ is a \emph{neighbour} of $\{u_2,\ldots, u_k\}$. The \emph{neighbourhood} of $\{u_2,\dots,u_k\}$ is denoted by $\Gamma(u_2,\dots,u_k)$.

We say that $\mathcal{A}$ is a \emph{symmetric tensor} if $a_{i_1\,i_2\cdots i_k} = a_{i_{\sigma{(1)}}\,i_{\sigma{(2)}}\cdots i_{\sigma{(k)}}}$ for any permutation $\sigma$ on $[k]$. From the definition of the adjacency tensor, it is easy to observe that for a hypergraph $G$, the tensor $\mathcal{A}_G$ is symmetric.

The following lemma can be obtained from \cite[Eq. (2.1)]{S2013}.

\begin{lemma}\label{l.count} Let $\mathcal{A}=(a_{i_1\cdots i_k})$ be an order $k\geq2$ dimension $n$ tensor, and let $Q=(q_{i\,j})$ be an $n\times n$ matrix. Then
$$(Q\mathcal{A}Q^\top)_{i_1\cdots i_k} = \sum\limits_{j_1,\dots,j_k\in[n]} a_{j_1\cdots j_k}q_{i_1\,j_1}q_{i_2\,j_2}\cdots q_{i_k\,j_k}.$$
\end{lemma}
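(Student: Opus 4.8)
The plan is to unravel Definition \ref{def:tensorproduct} twice, once for the left matrix factor and once for the right one, and then to merge the resulting nested sums into a single sum. Since Shao's tensor product is associative, I would read the triple product as $(Q\mathcal{A})Q^\top$, first forming the intermediate tensor $\mathcal{B} := Q\mathcal{A}$ and then multiplying it on the right by $Q^\top$.

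First I would compute $\mathcal{B} = Q\mathcal{A}$, where $Q$ plays the role of the order $m=2$ left factor and $\mathcal{A}$ the order $k$ right factor in Definition \ref{def:tensorproduct}. With $m=2$ the output has order $(2-1)(k-1)+1 = k$, and the definition collapses to a contraction on the first index only, so that left multiplication by a matrix touches a single slot of the tensor:
$$b_{i_1 j_2 \cdots j_k} = \sum_{j_1 \in [n]} q_{i_1 j_1}\, a_{j_1 j_2 \cdots j_k}.$$
Next I would compute $\mathcal{C} = \mathcal{B} Q^\top$, now taking the order $k$ tensor $\mathcal{B}$ as the left factor and the matrix $Q^\top$ as the right factor. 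This time the definition contracts the matrix against every slot except the first, giving
$$c_{i_1 i_2 \cdots i_k} = \sum_{j_2,\dots,j_k \in [n]} b_{i_1 j_2 \cdots j_k}\,(Q^\top)_{j_2 i_2}\cdots (Q^\top)_{j_k i_k}.$$

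The crucial observation is that $(Q^\top)_{j_\ell i_\ell} = q_{i_\ell j_\ell}$, so each of these $k-1$ factors takes exactly the same shape $q_{i_\ell j_\ell}$ as the factor produced by the left multiplication; this is precisely why the transpose appears and why the final formula is symmetric in its $k$ factors. Substituting the expression for $b_{i_1 j_2\cdots j_k}$ and relabelling, the nested sums combine into a single sum over $j_1,\dots,j_k \in [n]$, yielding the claimed identity.

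The computation is essentially bookkeeping, so the one real subtlety — the step I would be most careful about — is the asymmetry of the Shao product: left multiplication by a matrix acts on a single index while right multiplication acts on all the remaining indices. Keeping the summation indices and the free output indices distinct, and correctly reading $(Q^\top)_{ji} = q_{ij}$, is what makes the two contributions line up into the uniform product $q_{i_1 j_1}q_{i_2 j_2}\cdots q_{i_k j_k}$. As an alternative, one could bypass the two-step computation entirely and simply specialize Shao's general transformation formula \cite[Eq. (2.1)]{S2013} to the case of the two matrix factors $Q$ and $Q^\top$.
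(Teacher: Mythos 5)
Your proof is correct. Note, though, that the paper itself gives no computation at all: it simply states that the lemma ``can be obtained from'' Shao's transformation formula \cite[Eq.\ (2.1)]{S2013}, which is precisely the alternative you mention in your final sentence. Your primary route --- unfolding Definition~\ref{def:tensorproduct} twice, first computing $\mathcal{B}=Q\mathcal{A}$ with the matrix as the order-$2$ left factor (contracting only the first slot) and then $\mathcal{B}Q^\top$ with the matrix as the order-$2$ right factor (contracting the remaining $k-1$ slots, with $(Q^\top)_{j_\ell i_\ell}=q_{i_\ell j_\ell}$ restoring the symmetric form) --- is a self-contained first-principles derivation, and both contraction formulas and the final merging of sums check out, including in the degenerate case $k=2$. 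What your version buys is transparency about exactly where each factor $q_{i_\ell j_\ell}$ comes from, and it also makes explicit a point the paper leaves silent: the expression $Q\mathcal{A}Q^\top$ is only unambiguous because Shao's product is associative (proved in \cite{S2013}), which you correctly invoke before parsing it as $(Q\mathcal{A})Q^\top$. What the paper's citation buys is brevity, since Shao's Eq.\ (2.1) is exactly the specialization you perform. Either justification is acceptable; yours is the more instructive of the two.
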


From Lemma~\ref{l.count} we obtain the following result.

\begin{lemma}\label{l.sym}
Let $\mathcal{A}'=Q\mathcal{A}Q^\top$, where $\mathcal{A}$ is a tensor of dimension $n$ and $Q$ is an $n\times n$ matrix. If $\mathcal{A}$ is symmetric, then $\mathcal{A}'$ is symmetric.
\end{lemma}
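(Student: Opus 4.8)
The plan is to verify directly that $\mathcal{A}'$ satisfies the defining identity of a symmetric tensor, namely that $(\mathcal{A}')_{i_1\cdots i_k} = (\mathcal{A}')_{i_{\sigma(1)}\cdots i_{\sigma(k)}}$ for every permutation $\sigma$ of $[k]$. The only tools I expect to need are Lemma~\ref{l.count}, which gives an explicit formula for the entries of $Q\mathcal{A}Q^\top$ as a sum over products of matrix entries, together with the hypothesis that $\mathcal{A}$ is symmetric.

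First I would apply Lemma~\ref{l.count} to the permuted index tuple, writing
$$(\mathcal{A}')_{i_{\sigma(1)}\cdots i_{\sigma(k)}} = \sum_{j_1,\dots,j_k\in[n]} a_{j_1\cdots j_k}\, q_{i_{\sigma(1)}j_1}\cdots q_{i_{\sigma(k)}j_k}.$$
Since the scalar factors $q_{i_{\sigma(m)}j_m}$ commute, I can regroup the product so that it is indexed by the position $p$ of the outer index. The factor carrying $i_p$ is the one with $\sigma(m)=p$, i.e.\ $m=\sigma^{-1}(p)$, so that factor is $q_{i_p j_{\sigma^{-1}(p)}}$; hence the product equals $\prod_{p=1}^k q_{i_p j_{\sigma^{-1}(p)}}$.

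Next I would perform the change of summation variables $\ell_p = j_{\sigma^{-1}(p)}$, equivalently $j_m = \ell_{\sigma(m)}$, which is a bijection of $[n]^k$ onto itself and therefore leaves the total sum unchanged. Under this substitution the coefficient becomes $a_{\ell_{\sigma(1)}\cdots \ell_{\sigma(k)}}$, which by the symmetry of $\mathcal{A}$ equals $a_{\ell_1\cdots \ell_k}$, while the product of matrix entries becomes $\prod_{p=1}^k q_{i_p\ell_p}$. Collecting these, the sum is precisely the Lemma~\ref{l.count} expression for $(\mathcal{A}')_{i_1\cdots i_k}$, yielding the desired equality.

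The computation is essentially index bookkeeping, so I do not anticipate a genuine obstacle; the only point requiring care is tracking the permutation and its inverse correctly through the regrouping and the change of variables, and checking that $\ell_p = j_{\sigma^{-1}(p)}$ really is a permutation of the summation indices, so that no terms are lost or double-counted and the equality of the two sums is exact.
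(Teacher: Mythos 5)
Your proof is correct and follows exactly the route the paper intends: the paper gives no written proof, stating only that the lemma follows from Lemma~\ref{l.count}, and your index computation --- applying the entry formula to the permuted tuple, regrouping via $\sigma^{-1}$, substituting $\ell_p = j_{\sigma^{-1}(p)}$, and invoking the symmetry of $\mathcal{A}$ --- is precisely the intended verification. The permutation bookkeeping checks out, so nothing is missing.
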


Additionally, let $Q$ be a real orthogonal matrix. In \cite{S2013}, Shao pointed out that $\mathcal{A}$ and $\mathcal{A}'=Q\mathcal{A}Q^\top$ are orthogonally similar tensors as defined by Qi \cite{Q2005}, which implies that they have the same set of $E$-eigenvalues. In this case, the $E$-characteristic polynomials are also the same (see also \cite{LQZ2013}):

\begin{lemma}\label{l.ortho} Let $\mathcal{A}'=Q\mathcal{A}Q^\top$, where $\mathcal{A}$ is a tensor of dimension $n$ and $Q$ is an $n\times n$ real orthogonal matrix. Then $\mathcal{A}$ and $\mathcal{A}'$ have the same E-characteristic polynomial.
\end{lemma}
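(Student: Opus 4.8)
The plan is to exhibit the two defining polynomial systems as related by an orthogonal change of variables together with an invertible recombination of the equations, and then to argue that the resultant is unchanged. The computational backbone is the identity
$$\mathcal{A}'x = Q\,\mathcal{A}(Q^\top x),$$
which I would read off from Lemma~\ref{l.count}. Expanding $(\mathcal{A}'x)_i=\sum_{i_2,\dots,i_k}a'_{i i_2\cdots i_k}x_{i_2}\cdots x_{i_k}$ with $a'_{i i_2\cdots i_k}=\sum_{j_1,\dots,j_k}a_{j_1\cdots j_k}q_{i j_1}q_{i_2 j_2}\cdots q_{i_k j_k}$ and collecting the inner sums $\sum_{i_t}q_{i_t j_t}x_{i_t}=(Q^\top x)_{j_t}$ turns the right-hand side into $\sum_{j_1}q_{i j_1}(\mathcal{A}(Q^\top x))_{j_1}$, which is exactly $(Q\,\mathcal{A}(Q^\top x))_i$. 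Orthogonality of $Q$ will enter through $x^\top x=(Q^\top x)^\top(Q^\top x)$.

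For even $k$ the system defining $\phi_{\mathcal{A}'}(\lambda)$ is $\mathcal{A}'x-\lambda(x^\top x)^{(k-2)/2}x$; substituting the identity above and writing $y=Q^\top x$ I would rewrite it as
$$Q\bigl(\mathcal{A}y-\lambda(y^\top y)^{(k-2)/2}y\bigr),$$
that is, the system defining $\phi_{\mathcal{A}}(\lambda)$ first composed with the invertible change of variables $x=Qy$ and then left-multiplied by the constant matrix $Q$. Since $Q$ is invertible both operations are reversible, so the two systems have the same common zeros; this already recovers the equality of the $E$-eigenvalue \emph{sets}, and what remains is to see the resultants themselves agree. For odd $k$ I would run the same substitution on the enlarged system $(\mathcal{A}x-\lambda\beta^{k-2}x,\;x^\top x-\beta^2)$ in the variables $(x,\beta)$: now $y=Q^\top x$ with $\beta$ fixed is the change of variables given by the orthogonal block matrix $\mathrm{diag}(Q,1)$, the degree-$2$ equation is invariant because $x^\top x=y^\top y$, and the first $n$ equations are again recombined by $Q$.

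The main obstacle, and the only genuinely delicate point, is the scalar factor the resultant acquires. The tool I would use is that $\operatorname{Res}$ of $N$ forms of degrees $d_1,\dots,d_N$ is homogeneous of degree $\prod_{j\neq i}d_j$ in the coefficients of the $i$-th form; testing on diagonal matrices, and using that a polynomial character of $GL_N(\mathbb{C})$ is a power of the determinant, this yields two rules: a change of variables $x\mapsto Ax$ multiplies $\operatorname{Res}$ by $(\det A)^{d_1\cdots d_N}$, and left-multiplying a block of forms of common degree $d$ by a matrix $M$ multiplies it by $(\det M)^{(d_1\cdots d_N)/d}$. In the even-$k$ case ($N=n$, all $d_j=k-1$) the change of variables $A=Q^\top$ and the recombination $M=Q$ together contribute $(\det Q)^{(k-1)^n+(k-1)^{n-1}}=(\det Q)^{k(k-1)^{n-1}}$, whose exponent is even because $k$ is even. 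In the odd-$k$ case ($N=n+1$, with the degree-$2$ form left fixed by $\mathrm{diag}(Q,1)$) the two contributions are $(\det Q)^{2(k-1)^n}$ and $(\det Q)^{2(k-1)^{n-1}}$, both with even exponent. Since $\det Q=\pm1$, the net factor is $+1$ in every case, so $\phi_{\mathcal{A}'}(\lambda)=\phi_{\mathcal{A}}(\lambda)$. Alternatively, once the identity $\mathcal{A}'x=Q\mathcal{A}(Q^\top x)$ exhibits $\mathcal{A}$ and $\mathcal{A}'$ as orthogonally similar in the sense of Qi, this last step may simply be quoted from the invariance of the $E$-characteristic polynomial under orthogonal similarity recorded in \cite{LQZ2013}.
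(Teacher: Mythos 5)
Your argument is correct, but it takes a genuinely different route from the paper, which in fact offers no proof of Lemma~\ref{l.ortho} at all: the paper treats it as a known result, citing Shao \cite{S2013} for the observation that $\mathcal{A}'=Q\mathcal{A}Q^\top$ is orthogonally similar to $\mathcal{A}$ in the sense of Qi \cite{Q2005} (hence has the same $E$-eigenvalues), and Li, Qi and Zhang \cite{LQZ2013} for the stronger statement that the $E$-characteristic polynomials coincide --- your closing sentence is exactly this shortcut. What you add is a self-contained proof of the cited invariance. Your key identity $\mathcal{A}'x=Q\,\mathcal{A}(Q^\top x)$ is correctly derived from Lemma~\ref{l.count}, and it exhibits the system defining $\phi_{\mathcal{A}'}$ as the system defining $\phi_{\mathcal{A}}$ composed with the change of variables $x=Qy$ (respectively $(x,\beta)=(Qy,\beta)$ for odd $k$) and then recombined by $Q$ (respectively $\operatorname{diag}(Q,1)$, which legitimately falls under your recombination rule because it mixes only the $n$ forms of common degree $k-1$ and fixes the degree-$2$ form $x^\top x-\beta^2$). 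Your bookkeeping of the resultant multipliers is also right: the standard transformation rules give a net factor $(\det Q)^{(k-1)^n+(k-1)^{n-1}}=(\det Q)^{k(k-1)^{n-1}}$ for even $k$ and $(\det Q)^{2(k-1)^n}\cdot(\det Q)^{2(k-1)^{n-1}}$ for odd $k$, and in every case the exponent is even --- note that for even $k$ you genuinely need the evenness of $k$ to kill a possible sign when $\det Q=-1$ --- so with $\det Q=\pm1$ the factor is $1$ and $\phi_{\mathcal{A}'}=\phi_{\mathcal{A}}$ identically in $\lambda$. What each approach buys: the paper's citation is shorter and defers all resultant theory to \cite{LQZ2013}; your proof makes the lemma independent of that reference and makes visible exactly where orthogonality enters ($x^\top x=y^\top y$, plus $\det Q=\pm1$ against even exponents), which also helps explain why the analogous statement for the characteristic polynomial (Lemma~\ref{l.ortho2}) requires the additional hypothesis $Q\mathcal{I}_nQ^\top=\mathcal{I}_n$ while this one does not.
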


Let $\mathcal{I}_n$ be the \emph{unit tensor} of order $k$ and dimension $n$, \emph{i.e.} a tensor of elements $\delta_{i_1\cdots i_k}$ such that
$$\delta_{i_1\cdots i_k}=\begin{cases}
1, & i_1=i_2=\cdots =i_k,\\
0, & \text{otherwise}.
\end{cases}$$
A claim analogous to Lemma~\ref{l.ortho} can be made for characteristic polynomials of tensors, as it is a straightforward consequence of~\cite[Theorem 2.1]{S2013}:
\begin{lemma}\label{l.ortho2}
Let $\mathcal{A}'=Q\mathcal{A}Q^\top$, where $\mathcal{A}$ is a tensor of dimension $n$ and $Q$ is an $n\times n$ real orthogonal matrix such that $Q\mathcal{I}_nQ^\top=\mathcal{I}_n$. Then $\mathcal{A}$ and $\mathcal{A}'$ have the same characteristic polynomial.
\end{lemma}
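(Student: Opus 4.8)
The plan is to reduce the equality of characteristic polynomials to the multiplicativity of the tensor determinant under the congruence $\mathcal{B}\mapsto Q\mathcal{B}Q^\top$, paralleling the $E$-characteristic-polynomial case of Lemma~\ref{l.ortho}; the one genuinely new ingredient is the extra hypothesis $Q\mathcal{I}_nQ^\top=\mathcal{I}_n$. Concretely, writing $\Phi_{\mathcal{A}'}(\lambda)=\det(\lambda\mathcal{I}_n-\mathcal{A}')$, I would first show that the tensor $\lambda\mathcal{I}_n-\mathcal{A}'$ is itself the $Q$-congruence of $\lambda\mathcal{I}_n-\mathcal{A}$, and then apply Shao's determinant formula together with $\det Q=\pm1$ to collapse the resulting scalar factor.

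First I would record that the map $\mathcal{B}\mapsto Q\mathcal{B}Q^\top$ is linear in the entries of $\mathcal{B}$: this is immediate from the explicit formula in Lemma~\ref{l.count}, since each entry $(Q\mathcal{B}Q^\top)_{i_1\cdots i_k}$ is a fixed linear combination of the entries $b_{j_1\cdots j_k}$. Applying this to $\mathcal{B}=\lambda\mathcal{I}_n-\mathcal{A}$ gives
$$Q(\lambda\mathcal{I}_n-\mathcal{A})Q^\top=\lambda\,(Q\mathcal{I}_nQ^\top)-Q\mathcal{A}Q^\top=\lambda\mathcal{I}_n-\mathcal{A}',$$
where the last equality is exactly where the hypothesis $Q\mathcal{I}_nQ^\top=\mathcal{I}_n$ is used (together with $\mathcal{A}'=Q\mathcal{A}Q^\top$). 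This is the step that fails for a general orthogonal $Q$, and it explains why Lemma~\ref{l.ortho2} carries an assumption absent from Lemma~\ref{l.ortho}: the quantity $x^{[k-1]}$ defining the ordinary eigenvalue equation, unlike the norm $x^\top x$ governing $E$-eigenvalues, is not automatically respected by orthogonal congruence.

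Next I would invoke \cite[Theorem 2.1]{S2013}, which expresses the determinant of a congruence as $\det(P\mathcal{B}Q)=(\det P)^{(k-1)^{n-1}}(\det Q)^{(k-1)^{n}}\det(\mathcal{B})$ for $n\times n$ matrices $P,Q$; taking $P=Q$ and the right factor $Q^\top$ (so $\det Q^\top=\det Q$) yields $\det(Q\mathcal{B}Q^\top)=(\det Q)^{k(k-1)^{n-1}}\det(\mathcal{B})$. Since $Q$ is real orthogonal, $\det Q=\pm1$, and for $n\geq2$ the exponent $k(k-1)^{n-1}$ is even (it contains the even factor $k(k-1)$), so the prefactor equals $1$ and $\det(Q\mathcal{B}Q^\top)=\det(\mathcal{B})$. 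Combining with the identity of the previous paragraph, $\Phi_{\mathcal{A}'}(\lambda)=\det(\lambda\mathcal{I}_n-\mathcal{A}')=\det\!\big(Q(\lambda\mathcal{I}_n-\mathcal{A})Q^\top\big)=\det(\lambda\mathcal{I}_n-\mathcal{A})=\Phi_{\mathcal{A}}(\lambda)$, as desired.

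The step I expect to require the most care is the last one: pinning down that the power of $\det Q$ produced by Shao's formula is exactly $+1$ rather than merely $\pm1$. This hinges on the parity of the exponent $k(k-1)^{n-1}$, so I would state the dimensional restriction $n\geq2$ explicitly (harmless for hypergraphs, where $n\geq k\geq2$), or alternatively argue directly that an orthogonal congruence cannot change the sign of the determinant. Verifying the precise exponents in Shao's formula — for instance by testing $P=pI_n$, $Q=qI_n$ against the known homogeneity degree $n(k-1)^{n-1}$ of the tensor determinant — is the routine computation underlying this point.
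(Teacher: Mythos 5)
Your proof is correct, and it is essentially the argument the paper gestures at: the paper gives no written proof, saying only that the lemma is a straightforward consequence of \cite[Theorem 2.1]{S2013}, which is Shao's statement that \emph{similar} tensors ($\mathcal{A}'=P\mathcal{A}Q$ with nonsingular $P,Q$ and $P\mathcal{I}_nQ=\mathcal{I}_n$ --- precisely what the hypothesis $Q\mathcal{I}_nQ^\top=\mathcal{I}_n$ guarantees here, since $Q^\top=Q^{-1}$) have the same characteristic polynomial; your two steps, pushing $Q(\cdot)Q^\top$ through $\lambda\mathcal{I}_n-\mathcal{A}$ by linearity and then killing the determinant prefactor, are exactly the standard proof of that theorem. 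Two refinements are worth noting. First, an attribution quibble: the identity $\det(P\mathcal{B}Q)=(\det P)^{(k-1)^{n-1}}(\det Q)^{(k-1)^{n}}\det(\mathcal{B})$ is not itself \cite[Theorem 2.1]{S2013} but the underlying determinant lemma; it follows from the two classical resultant properties you implicitly use (linear combinations of the forms for the left factor, a linear change of variables $x\mapsto Qx$ for the right factor), and your homogeneity test of the exponents is the right sanity check. Second, your parity endgame --- and with it the $n\geq 2$ caveat and even the use of $\det Q=\pm1$ --- can be avoided entirely: apply the same determinant identity to the hypothesis $Q\mathcal{I}_nQ^\top=\mathcal{I}_n$ itself; since $\det(\mathcal{I}_n)=1$ (the resultant of $x_1^{k-1},\dots,x_n^{k-1}$), this yields $(\det Q)^{k(k-1)^{n-1}}=1$ on the nose, so the prefactor collapses in every dimension, and it becomes transparent that orthogonality of $Q$ enters only through $Q^\top=Q^{-1}$.
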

Note that the additional identity $Q\mathcal{I}_nQ^\top=\mathcal{I}_n$ does not hold in general for a real orthogonal matrix $Q$, making cospectrality of tensors a stronger property than $E$-cospectrality.

Lemma~\ref{l.ortho} will be the key ingredient for proving the $E$-cospectrality of the hypergraphs constructed using the method described in Section \ref{sec:Ecospectralityswitching}.

\subsection{Constructing $E$-cospectral hypergraphs using adjacency tensors}\label{sec:Ecospectralityswitching}

Inspired by the WQH-switching~\cite{WQH2019} for graphs, we propose a method to construct $E$-cospectral hypergraphs.

\begin{theorem}\label{t.main}  
Let $G$ be a $k$-uniform hypergraph on $n$ vertices that satisfies the following conditions:
\begin{enumerate}
\item The vertex set $V(G)$ is partitioned into three sets $C_1\cup C_2\cup D$ with $|C_1|=|C_2|=t$.
\item For any edge $\{ u_1,\dots,u_k\}\in E(G)$, there is at most one vertex in $C_1\cup C_2$, {\it i.e.} $|\{ u_1,\dots,u_k\}\cap (C_1\cup C_2)|\leq 1$.
\item For any $k-1$ distinct vertices $u_2,\dots,u_k$ from $D$, we have $\Gamma(u_2,\dots,u_k)\cap (C_1\cup C_2)\in\{C_1, C_2\}$ or $|\Gamma(u_2,\dots,u_k)\cap C_1|=|\Gamma(u_2,\dots,u_k)\cap C_2|$.
\end{enumerate}

To construct a hypergraph $H$, for any $\{u_2,\dots,u_k\}\subseteq D$ that has all its neighbours in $C_1$ (or $C_2$), switch the adjacency of $\{u_1,\dots,u_k\}$ for all $u_1\in C_1\cup C_2$. Then $H$ is a $k$-uniform $E$-cospectral hypergraph with $G$.
\end{theorem}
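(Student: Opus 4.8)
The plan is to realise the switching from $G$ to $H$ as an orthogonal similarity of adjacency tensors and then invoke Lemma~\ref{l.ortho}. Order the vertices so that $C_1$ comes first, then $C_2$, then $D$, and let $w\in\R^n$ be the vector equal to $+1$ on $C_1$, $-1$ on $C_2$, and $0$ on $D$, so that $w^\top w=2t$. I would set
$$Q=I_n-\frac{2}{w^\top w}ww^\top=I_n-\frac1t ww^\top,$$
the Householder reflection in the hyperplane $w^\perp$. Then $Q$ is real, symmetric and orthogonal, it fixes every coordinate of $D$, and on $C_1\cup C_2$ it has the block form $\left(\begin{smallmatrix} I_t-\frac1t J_t & \frac1t J_t \\ \frac1t J_t & I_t-\frac1t J_t\end{smallmatrix}\right)$. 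I would define $\mathcal{A}_H:=Q\mathcal{A}_G Q^\top$ and prove that $\mathcal{A}_H$ is exactly the adjacency tensor of the hypergraph $H$ described in the statement; Lemma~\ref{l.sym} already guarantees that $\mathcal{A}_H$ is symmetric, and Lemma~\ref{l.ortho} then yields $E$-cospectrality for free.

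The heart of the argument is an entry-by-entry computation of $\mathcal{A}_H$ via Lemma~\ref{l.count},
$$(\mathcal{A}_H)_{i_1\cdots i_k}=\sum_{j_1,\dots,j_k\in[n]}(\mathcal{A}_G)_{j_1\cdots j_k}\,q_{i_1j_1}\cdots q_{i_kj_k},$$
organised according to how many of the indices $i_1,\dots,i_k$ lie in $C_1\cup C_2$. Since $q_{ij}=\delta_{ij}$ whenever $i\in D$ and $q_{ij}=0$ whenever exactly one of $i,j$ lies in $D$, the sum collapses on every $D$-index. If all indices lie in $D$, every factor is a Kronecker delta and the entry is unchanged, so edges inside $D$ survive. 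If two or more indices lie in $C_1\cup C_2$, then in each surviving term at least two of the summation indices $j_\ell$ are forced into $C_1\cup C_2$; such a $k$-tuple either repeats a vertex or, being an edge, would contain two vertices of $C_1\cup C_2$, contradicting condition~2, so $(\mathcal{A}_G)_{j_1\cdots j_k}=0$ and the entry vanishes, exactly as required of a hypergraph respecting condition~2.

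The decisive case is a single index in $C_1\cup C_2$, say $i_1=v\in C_1\cup C_2$ and $i_2,\dots,i_k\in D$ distinct. Writing $S=\{i_2,\dots,i_k\}$ and letting $\mathbf{m}_S\in\{0,1\}^{C_1\cup C_2}$ record which vertices of $C_1\cup C_2$ are neighbours of $S$, the computation reduces to $(\mathcal{A}_H)_{v,i_2,\dots,i_k}=\frac1{(k-1)!}(Q\mathbf{m}_S)_v$, so I must check that $Q\mathbf{m}_S$ is again a $0/1$ vector describing the switched adjacencies. Here condition~3 is exactly what is needed: using $Qx=x-\frac{\langle x,w\rangle}{t}\,w$, a set $S$ whose neighbourhood in $C_1\cup C_2$ equals $C_1$ has $\mathbf{m}_S=\chi_{C_1}$ and $\langle\chi_{C_1},w\rangle=t$, whence $Q\mathbf{m}_S=\chi_{C_2}$; symmetrically $\chi_{C_2}\mapsto\chi_{C_1}$; and a balanced neighbourhood has $\langle\mathbf{m}_S,w\rangle=0$, so $Q\mathbf{m}_S=\mathbf{m}_S$ is fixed. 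These are precisely the switched adjacencies of $H$, and each output is a genuine $0/1$ vector.

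I expect the main obstacle to be bookkeeping rather than ideas: one must handle the symmetric tensor's repeated indices carefully (in particular, the possibility that the summation indices $j_\ell$ collide, which is what rules out spurious edges in the two-index case) and confirm that the balanced hyperplane is exactly $w^\perp$, so that condition~3 guarantees $Q$ sends adjacency vectors to adjacency vectors. Once $\mathcal{A}_H=Q\mathcal{A}_GQ^\top$ is established with $Q$ real orthogonal, Lemma~\ref{l.ortho} immediately gives that $G$ and $H$ have the same $E$-characteristic polynomial, that is, they are $E$-cospectral, completing the proof.
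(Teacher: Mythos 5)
Your proposal is correct and takes essentially the same approach as the paper: your Householder reflection $Q=I_n-\frac1t ww^\top$ is entrywise identical to the block matrix $\left(\begin{smallmatrix} I_t-\frac1t J_t & \frac1t J_t & 0\\ \frac1t J_t & I_t-\frac1t J_t & 0\\ 0 & 0 & I_{n-2t}\end{smallmatrix}\right)$ used there, and the argument proceeds the same way, via Lemmas~\ref{l.count}, \ref{l.sym} and \ref{l.ortho} and a case split on $|\{i_1,\dots,i_k\}\cap(C_1\cup C_2)|$. The only cosmetic difference is that you package the decisive one-index case as $Q$ acting on the neighbourhood indicator vector $\mathbf{m}_S$ (sending $\chi_{C_1}\mapsto\chi_{C_2}$, $\chi_{C_2}\mapsto\chi_{C_1}$, and fixing balanced vectors), whereas the paper expands the same sum $\sum_{j_1\in C_1\cup C_2} a_{j_1 i_2\dots i_k}q_{i_1 j_1}$ explicitly in three subcases.
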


\begin{proof}
To prove this result, we will show that $$\mathcal{A}_H=Q \mathcal{A}_G Q^\top,$$ where $\mathcal{A}_G$ and $\mathcal{A}_H$ are the adjacency tensors of $G$ and $H$, and
$$Q=\left(\begin{matrix}
I_t-\frac1t J_t & \frac1t J_t & 0 \\
\frac1t J_t & I_t-\frac1t J_t & 0 \\
0 & 0 & I_{n-2t}
\end{matrix}\right).$$
Since $Q$ is a real orthogonal matrix, the $E$-cospectrality of $G$ and $H$ follows from Lemma~\ref{l.ortho}.

Let $\mathcal{A}'=Q \mathcal{A}_G Q^\top$. According to Lemma~\ref{l.sym}, $\mathcal{A}'$ is symmetric. By Lemma~\ref{l.count}, $$(\mathcal{A}')_{i_1\cdots i_k}=\sum\limits_{j_1,\dots,j_k\in V(G)} a_{j_1\cdots j_k}q_{i_1\,j_1}\cdots q_{i_k\,j_k}.$$
We need to show that $\mathcal{A}'=\mathcal{A}_H$.

First, consider the case $|\{i_1,\dots,i_k\}\cap (C_1\cup C_2)|=0$, or $\{i_1,\dots,i_k\}\subseteq D$. Then, for any $s\in[k]$, $q_{i_s\,j_s}=1$ if and only if $i_s=j_s$ and is equal to $0$ otherwise. Hence
$$(\mathcal{A}')_{i_1\cdots i_k}=a_{i_1\cdots i_k}\text{ if }\{i_1,\dots,i_k\}\subseteq D.$$

Next, if $|\{i_1,\dots,i_k\}\cap (C_1\cup C_2)|\geq2$, then $a_{i_1\cdots i_k}=0$ since every edge of $H$ has no more than one vertex in $C_1\cup C_2$. We consider three cases for $\{j_1,\dots,j_k\}$:
\begin{itemize}
\item If $|\{j_1,\dots,j_k\}\cap (C_1\cup C_2)|=0$ then without loss of generality we may assume that $i_1\in (C_1\cup C_2)$. Since $j_1\in D$, we have $q_{i_1\,j_1}=0$.
\item If $|\{j_1,\dots,j_k\}\cap (C_1\cup C_2)|\geq2$ then $\{j_1,\dots,j_k\}$ is not an edge of $G$ and $a_{j_1\cdots j_k}=0$.
\item If $|\{j_1,\dots,j_k\}\cap (C_1\cup C_2)|=1$ then without loss of generality we may assume that $i_1, i_2 \in (C_1 \cup C_2)$. Then, either $j_1 \in D$ or $j_2 \in D$, hence we have $q_{i_1\,j_1} = 0$ or $q_{i_2\,j_2} = 0$.
\end{itemize}
This argument implies that every term in the sum that defines $(\mathcal{A}')_{i_1\cdots i_k}$ is equal to zero, and so
$$(\mathcal{A}')_{i_1\cdots i_k}=0=a_{i_1\cdots i_k}\text{ if }|\{i_1,\dots,i_k\}\cap (C_1\cup C_2)|\geq2.$$

The final case is $|\{i_1,\dots,i_k\}\cap (C_1\cup C_2)|=1$. Without loss of generality we may assume $i_1\in (C_1\cup C_2)$ and $i_2,\dots,i_k\in D$. Since $q_{i_s\,j_s}=0$ for any $s\in\{2,\dots,k\}$ unless $j_s=i_s$ when $q_{i_s\,j_s}=1$, and also $q_{i_1\,j_1}=0$ if $j_1\in D$, we have
$$(\mathcal{A}')_{i_1\cdots i_k}=\sum\limits_{j_1\in (C_1\cup C_2)} a_{j_1\,i_2\cdots i_k}q_{i_1\,j_1}.$$
We assume $i_1\in C_1$, and the case $i_1\in C_2$ can be considered analogously.

There are three possibilities:
\begin{description}
\item[Case 1.] The set $\{i_2,\dots,i_k\}$ has every vertex of $C_1$ as a neighbour and no neighbours in $C_2$. This implies $i_1$ is one of these neighbours, so $a_{i_1\,i_2\cdots i_k}=\frac1{(k-1)!}$. Then 
\begin{align*}
(\mathcal{A}')_{i_1\cdots i_k}&=\sum\limits_{j_1\in (C_1\cup C_2)} a_{j_1\,i_2\cdots i_k}q_{i_1\,j_1}= \frac1{(k-1)!}\sum\limits_{j_1\in C_1}q_{i_1\,j_1}\\
&=\frac1{(k-1)!}\sum\limits_{j_1\in C_1, j_1\neq i_1} q_{i_1\,j_1} + \frac1{(k-1)!}q_{i_1\,i_1}\\
&=\frac1{(k-1)!}\left(- (t-1)\frac1t + \frac{t-1}t\right)=0\neq a_{i_1\,i_2\cdots i_k}.
\end{align*}

\item[Case 2.] The set $\{i_2,\dots,i_k\}$ has every vertex of $C_2$ as a neighbour and no neighbours in $C_1$, implying $a_{i_1\,i_2\cdots i_k}=0$. Then 
\begin{align*}
(\mathcal{A}')_{i_1\cdots i_k}&=\sum\limits_{j_1\in (C_1\cup C_2)} a_{j_1\,i_2\cdots i_k}q_{i_1\,j_1}=\sum\limits_{j_1 \in C_2} a_{j_1\,i_2\cdots i_k}q_{i_1\,j_1}\\
&= t\cdot\frac1{(k-1)!}\cdot\frac1t
=\frac1{(k-1)!}\neq a_{i_1\,i_2\cdots i_k}.    
\end{align*}
 
\item[Case 3.] The set $\{i_2,\dots,i_k\}$ has $r\leq t$ neighbours in $C_1$ as well as in $C_2$. Then
\begin{align*}
(\mathcal{A}')_{i_1\cdots i_k}&=\sum\limits_{j_1 \in C_1} a_{j_1\,i_2\cdots i_k}q_{i_1\,j_1}+\sum\limits_{j_1 \in C_2} a_{j_1\,i_2\cdots i_k}q_{i_1\,j_1}\\
&=\sum\limits_{j_1 \in C_1} a_{j_1\,i_2\cdots i_k}q_{i_1\,j_1}+r\cdot\frac1{(k-1)!}\cdot\frac1t.
\end{align*}
Depending on whether $i_1$ is one of the $r$ neighbours in $C_1$ or not, we have either
$$(\mathcal{A}')_{i_1\cdots i_k}=-\frac1t\cdot r\cdot\frac1{(k-1)!}+r\cdot\frac1{(k-1)!}\cdot\frac1t=0\text{ when }a_{i_1\,i_2\cdots i_k}=0,$$
or
\begin{align*}
 (\mathcal{A}')_{i_1\cdots i_k}&=-\frac1t\cdot (r-1)\cdot\frac1{(k-1)!}+\frac{t-1}t \cdot\frac1{(k-1)!}+r\cdot\frac1{(k-1)!}\cdot\frac1t\\
 &=\frac1{(k-1)!} \text{    when }a_{i_1\,i_2\cdots i_k}\neq 0.
\end{align*}
Either way we derive that in Case 3,
$$(\mathcal{A}')_{i_1\cdots i_k}=a_{i_1\,i_2\cdots i_k}.$$   
\end{description}

By considering every possible case we see that $\mathcal{A}'$ is the adjacency tensor of a hypergraph that can be obtained from $G$ by switching adjacency of every edge $\{i_1,i_2,\dots,i_k\}$ such that $\{i_2,\dots,i_k\}\subseteq D$ and $i_1\in (C_1\cup C_2)$ in case $\Gamma(i_2,\dots,i_k)\cap(C_1\cup C_2)\in\{C_1,C_2\}$, which is exactly $H$ by construction. Hence $\mathcal{A}'=\mathcal{A}_H$ as required.
\end{proof}

\begin{remark}Note that the matrix $Q$ defined in the proof of Theorem~\ref{t.main} in general does not satisfy $Q\mathcal{I}_nQ^\top=\mathcal{I}_n$. Hence the hypergraph~$H$ obtained from $G$ as the result of switching described in Theorem~\ref{t.main} is $E$-cospectral but not necessarily cospectral with~$G$.
\end{remark}

We will refer to the switching of Theorem~\ref{t.main} as \emph{$E$-WQH switching}.
The paper~\cite{bzw2014} describes a switching to construct $E$-cospectral hypergraphs by a method which is similar to GM-switching~\cite{gm1982}. This will be referred to as \emph{$E$-GM switching}.

Note that the switching described in Theorem~\ref{t.main} is based on the simplified version of WQH-switching for ordinary graphs. However, the cospectrality of the hypergraphs constructed using the more general version of the switching can also be argued in a similar manner. Indeed, let $G$ be a $k$-uniform hypergraph such that $V(G)$ admits partition $C_1\cup C_2\cup \dots \cup C_{2m}\cup D$ for some integer $m\geq 1$ with $|C_i|=|C_{i+1}|$ for any odd $i\in[2m]$, any edge has at most one vertex in $V(G)\setminus D$, and for each odd $i\in[2m]$ and a subset $\{u_2,\dots,u_k\}\subseteq D$ we have either $\Gamma(u_2,\dots,u_k)\cap(C_i\cup C_{i+1})\in\{C_i,C_{i+1}\}$ or $|\Gamma(u_2,\dots,u_k)\cap C_i|=|\Gamma(u_2,\dots,u_k)\cap C_{i+1}|$. Then a hypergraph $H$ is constructed by taking all subsets $\{u_2,\dots,u_k\}\subseteq D$ such that $\Gamma(u_2,\dots,u_k)\cap(C_i\cup C_{i+1})\in\{C_i,C_{i+1}\}$ for some odd $i\in[2m]$ and switching adjacency between the subset and all the vertices in $C_i\cup C_{i+1}$.
A similar observation can be made regarding to $E$-GM switching, which is based on the simplified version of GM-switching but can be extended to its general version~\cite{gm1982}.

\begin{remark}\label{sec:remarksssize2}
If $G$ is a hypergraph that admits the conditions of Theorem~\ref{t.main} and $|C_1\cup C_2|=2$, then the hypergraph constructed as a result of $E$-WQH switching is isomorphic to $G$. It is easily observed that the isomorphism is the permutation of the two vertices of $C_1\cup C_2$. The same observation is true for $E$-GM switching.
\end{remark}

In $E$-GM switching, it is required to partition the vertices of a hypergraph $G$ into two sets $C$ and $D$, where no two vertices of $C$ are in the same edge, and any $(k-1)$-subset of $D$ has either $0$, $\frac{|C|}2$, or $|C|$ neighbours in $C$. We will call $C$ the \emph{switching set} of $G$.

\begin{remark} If $G$ is a $k$-uniform hypergraph satisfying the conditions of Theorem~\ref{t.main} and $|C_1\cup C_2|=4$, then the hypergraph constructed as a result of $E$-WQH switching is isomorphic to the one constructed using $E$-GM switching. Indeed, any subset of $k-1$ vertices in $D$ must have $0$, $2$, or $4$ neighbours in $C_1\cup C_2$. This implies that the conditions of $E$-GM switching are satisfied for $G$ with the switching set $C:=C_1\cup C_2$. Applying $E$-GM switching is equivalent to applying $E$-WQH switching and a permutation of the two vertices in $C_1$ and the two vertices in $C_2$.
\end{remark}

Finally we provide an example of two $E$-cospectral non-isomorphic hypergraphs that can be obtained through the new $E$-WQH switching from Theorem~\ref{t.main} but not via the $E$-GM switching shown in \cite{bzw2014}.
\begin{example}
Let $G$ be a $3$-uniform hypergraph on $9$ vertices $u_1,u_2,u_3,u_4,u_5,u_6,v_1,v_2,v_3$ with edges
\begin{gather*}
    \{v_1,v_2,u_1\},\;\{v_1,v_2,u_2\},\;\{v_1,v_2,u_3\},\;\{v_2,v_3,u_1\},\;\{v_2,v_3,u_4\},\\
    \{v_1,v_3,u_2\},\;\{v_1,v_3,u_3\},\;\{v_1,v_3,u_4\},\;\{v_1,v_3,u_5\}.
\end{gather*}
If we take $C_1:=\{u_1,u_2,u_3\}$ and $C_2:=\{u_4,u_5,u_6\}$, then every edge has exactly one vertex in $C_1\cup C_2$, and every $2$-subset of $\{v_1,v_2,v_3\}$ has either three neighbours in $C_1$ and none in $C_2$, or the same number of neighbours in both $C_1$ and $C_2$. Hence this hypergraph admits the switching described in Theorem~\ref{t.main}, and the result is a hypergraph $H$ with edge set
\begin{gather*}
    \{v_1,v_2,u_4\},\;\{v_1,v_2,u_5\},\;\{v_1,v_2,u_6\},\;\{v_2,v_3,u_1\},\;\{v_2,v_3,u_4\},\\
    \{v_1,v_3,u_2\},\;\{v_1,v_3,u_3\},\;\{v_1,v_3,u_4\},\;\{v_1,v_3,u_5\},
\end{gather*}
\emph{i.e.} only the first three edges are switched.
$G$ and $H$ are clearly $E$-cospectral but not isomorphic, since only one of them has an isolated vertex. Moreover, the hypergraph $H$ cannot be constructed from $G$ using the $E$-GM switching, since the switching set would have to include $C_1\cup C_2$ and simultaneously cannot include any of the vertices from $\{v_1,v_2,v_3\}$. The set $C_1\cup C_2$ by itself does not satisfy the conditions of $E$-GM switching.
\end{example}

\section{Constructing cospectral hypergraphs using matrix representation}\label{sec:matrixcospectralityswitching}

One of the disadvantages of studying the hypergraph spectrum using tensors, as in the previous section, is the computational complexity: computing eigenvalues of the adjacency tensor is known to be an NP-hard problem \cite{HL2013}. On the other hand, a different way of representing a hypergraph on $n$ vertices can be found in the literature (see, for example, \cite{FL1996,LZ2017,SSP2022,R2002}). Here we have an $n\times n$ matrix where its $(i,j)$-entry is the number of edges that vertices labelled $i$ and $j$ share. We use the slightly altered definition from \cite{B2021} for the \emph{adjacency matrix} $A=(A_{i\,j})$ of a $k$-uniform hypergraph $G$:

$$A_{i\,j}=\begin{cases}
0, & i=j, \\
\frac{|\{e\in E(G) | i,j\in e \}|}{k-1}, & i\neq j.
\end{cases}$$

Two hypergraphs are said to be \emph{cospectral} with respect to their matrix representation if their adjacency matrices have the same spectrum.

A first switching method to construct cospectral hypergraphs regarding the previous adjacency matrix was shown by Sarkar and Banerjee in~\cite{SB2020}. Their method is analogous to GM-switching for ordinary graphs in its most general form. In this section we show a different switching method which extends WQH-switching~ \cite{WQH2019, QJW2020} to hypergraphs.

\begin{theorem}\label{t.matrix} Let $G=(V(G),E(G))$ be a $k$-uniform hypergraph whose vertex set admits a partition $C_1\cup C_2\cup \dots\cup C_{2m}\cup D$ for some integer $m\geq 1$, and such that the following conditions for $G$ hold:
\begin{enumerate}
\item $|C_i|=t$ for all $i\in[2m]$ and some integer $t$, while $|D|=k-1$.
\item Any edge of $G$ has either $0$ or $k-1$ vertices in $D$.
\item For any odd $i\in[2m]$, we have either $\Gamma(D)\cap(C_i\cup C_{i+1})=C_i$ or $|\Gamma(D)\cap C_i|=|\Gamma(D)\cap C_{i+1}|$, where $\Gamma(D)$ denotes the set of neighbours of $D$, or a subset of vertices $v$ such that $\{v,D\}\in E(G)$.
\item For the adjacency matrix $A$ and each $i,j\in[2m]$ there exists an integer $\alpha_{i\,j}$ such that
$$\sum\limits_{u\in C_i} A_{u\,v}=\sum\limits_{u\in C_i} A_{v\,u}=\alpha_{i\,j} \text{ for }v\in C_j\text{\; and \;} \sum\limits_{u\in C_j} A_{u\,v}=\sum\limits_{u\in C_j} A_{v\,u}=\alpha_{i\,j}\text{ for }v\in C_i.$$
\item For any odd $i,j\in[2m]$, $\alpha_{i\,j}=\alpha_{i+1\,j+1}$ and $\alpha_{i+1\,j}=\alpha_{i\,j+1}$.
\end{enumerate}

Let $H$ be the hypergraph which is constructed from $G$ as follows. For all odd $i\in[2m]$, if  $\Gamma(D)\cap(C_i\cup C_{i+1})=C_i$ then remove all edges of the form $\{D,v\}$ with $v\in C_i$ and add the edges of the form $\{D,v\}$ for all $v\in C_{i+1}$.

Then $H$ and $G$ are cospectral with respect to their matrix representation.
\end{theorem}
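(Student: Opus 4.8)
The plan is to mimic the tensor argument of Theorem~\ref{t.main} at the level of matrices. Since cospectrality with respect to the matrix representation means that $A_G$ and $A_H$ have the same characteristic polynomial, it suffices to exhibit an orthogonal matrix $Q$ with $A_H = Q A_G Q^\top$; orthogonal similarity preserves the spectrum. Guided by the shape of $Q$ in Theorem~\ref{t.main}, I would take $Q$ to be block-diagonal, acting as the identity on the $D$-block and, on each pair $C_i \cup C_{i+1}$ (for odd $i$), as the $2t \times 2t$ block
\begin{equation*}
Q_i = \left(\begin{matrix} I_t - \tfrac1t J_t & \tfrac1t J_t \\[2pt] \tfrac1t J_t & I_t - \tfrac1t J_t \end{matrix}\right),
\end{equation*}
which is symmetric, orthogonal, and involutory. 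The first step is therefore to record that this $Q$ is real orthogonal, so that $A_H = Q A_G Q^\top$ would immediately give cospectrality.

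The substance is to verify entrywise that $A' := Q A_G Q^\top$ equals $A_H$. I would split the verification by the location of the index pair $(a,b)$. First, when both $a,b \in D$, or both lie outside every switched pair, $Q$ acts trivially and $(A')_{ab} = (A_G)_{ab} = (A_H)_{ab}$, since switching only touches edges of the form $\{D,v\}$. Next I would handle $a,b$ both inside the $C$-blocks: here condition~4, which fixes the row/column sums $\alpha_{ij}$ of $A_G$ over each $C_i$, is exactly what forces the averaging performed by the $\tfrac1t J_t$ blocks to reproduce the original entries, so that $A_G$ and $H$ agree on the $C$--$C$ part and $(A')_{ab} = (A_G)_{ab}$; crucially the switching does not alter adjacencies among the $C_i$, only between $D$ and the $C_i$. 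The decisive case is the mixed one, $a \in C_i \cup C_{i+1}$ and $b \in D$ (and its transpose). Here, expanding $(A')_{ab} = \sum_{c} (Q)_{ac}(A_G)_{cb}$ and using that $Q$ is the identity on the $b$-index, the computation reduces to averaging the $D$-neighbour pattern of $a$ across the pair $C_i \cup C_{i+1}$, entirely parallel to Cases~1--3 in the proof of Theorem~\ref{t.main}. When $\Gamma(D) \cap (C_i \cup C_{i+1}) = C_i$, the off-diagonal $-\tfrac1t$ and diagonal $\tfrac{t-1}t$ contributions cancel or combine to transfer the $\{D,v\}$-adjacency from $C_i$ to $C_{i+1}$, which is precisely the switching defining $H$; when the neighbour counts in $C_i$ and $C_{i+1}$ are equal, the two halves of the averaged sum coincide and $(A')_{ab} = (A_G)_{ab}$, so no change occurs, again matching $H$.

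I expect the main obstacle to be the $C$--$C$ block verification, where condition~4 must be invoked carefully. Unlike the tensor setting, the matrix entries $A_{uv}$ count shared edges and need not be $0/1$, so I would have to check that the conjugation by $Q_i$ leaves each such entry fixed; this is where the hypotheses $\sum_{u \in C_i} A_{uv} = \alpha_{ij}$ (for $v \in C_j$) and its symmetric counterpart do the work, guaranteeing that the row- and column-averaging introduced by $\tfrac1t J_t$ returns the original value. One must also confirm that the construction of $H$ does not inadvertently change entries counted across two different switched pairs, which follows from condition~2 (every edge has $0$ or $k-1$ vertices in $D$) together with $|D| = k-1$, forcing any edge meeting $D$ to consist of $D$ plus a single $C$-vertex. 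Once all blocks are checked, the identity $A_H = Q A_G Q^\top$ holds and cospectrality follows.
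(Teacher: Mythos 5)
Your proposal is correct and takes essentially the same approach as the paper: the paper uses exactly the same orthogonal matrix $Q$ (block-diagonal, with the involution $\left(\begin{smallmatrix} I_t-\frac1t J_t & \frac1t J_t \\ \frac1t J_t & I_t-\frac1t J_t \end{smallmatrix}\right)$ on each pair $C_i\cup C_{i+1}$ and $I_{k-1}$ on $D$) and proves $A_H=QA_GQ^\top$ by the same case split, with condition~4 handling the $C$--$C$ part and condition~3 the mixed $D$--$C$ part. The only cosmetic difference is that the paper carries out the verification at the block level, via the identities $YB_{ij}=B_{ij}Y=\frac{\alpha_{ij}}{t}J_t$ and $XB_{ij}=B_{ij}X=B_{ij}-\frac{\alpha_{ij}}{t}J_t$, rather than entrywise as you sketch.
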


Observe that by the condition (4) of Theorem~\ref{t.matrix}, we obtain that for all $i,j \in [2m]$, we have $\alpha_{i,j} = \alpha_{j,i}$.

\begin{proof} Let $X=I_t-\frac1t J_t$ and $Y=\frac1t J_t$ be $t\times t$ matrices. Without loss of generalization, we assume that the labelling of vertices of $G$ is consistent with the partition $C_1\cup C_2\cup \dots\cup C_{2m-1}\cup C_{2m}\cup D$.

Consider the block matrix $$Q=\left(\begin{matrix}
X & Y & O & O  & \dots & O \\
Y & X & O & O  & \dots & O \\
O & O & X & Y  & \dots & O \\
O & O &Y & X  & \dots & \vdots \\
\vdots & \cdots & \cdots & \cdots & \cdots & O\\
O & \cdots & \cdots & \cdots & O & I_{k-1}
\end{matrix}\right).$$
It is clear that $Q$ is an orthogonal matrix, which means that $A':=Q A Q^\top$ is a matrix with the same spectrum as $A$.  Hence all we need to prove is that $A'$ is the adjacency matrix of $H$.

We can write $A$ as a block matrix $$A=\left(\begin{matrix}
B_{1\,1} & B_{1\,2} & \cdots & B_{1\,2m} & B_1 \\
B_{2\,1} & B_{2\,2} & \cdots & B_{2\,2m} & B_2 \\
\vdots & \cdots & \cdots & \vdots & \vdots \\
B_{2m\,1} & B_{2m\,2} & \cdots & B_{2m\,2m} & B_{2m} \\
B_1^\top & B_2^\top & \cdots & B_{2m}^\top & t(J-I)
\end{matrix}\right),$$
where $B_{i\,j}$ are $t\times t$ blocks for all $i,j\in[2m]$, while $B_i$ are $t\times (k-1)$ blocks of all-one or all-zero rows. According to the conditions of the theorem satisfied by $G$,
the sum of rows and columns in $B_{i\,j}$ is $\alpha_{i\,j}$ for any $i,j\in[2m]$. The matrix $A'$ can also be represented as a block matrix in a similar way.

Let $B'_{i\,j}$, $B'_i$ for $i,j\in[2m]$ be the $t\times t$ and $t\times (k-1)$ blocks of $A'$, respectively. Let $i$ and $j$ be odd integers from $[2m]$. Then by block matrix multiplication, we can derive that
{\footnotesize{
\begin{align*}
&\left(\begin{matrix}
B'_{i\,j} & B'_{i\, j+1} \\
B'_{i+1\, j} & B'_{i+1\,j+1}
\end{matrix}\right) =\\
&=
\left(\begin{matrix}
(XB_{i\,j}+YB_{i+1\,j})X+(XB_{i\,j+1}+YB_{i+1\,j+1})Y & (XB_{i\,j}+YB_{i+1\,j})Y+(XB_{i\,j+1}+YB_{i+1\,j+1})X  \\
(YB_{i\,j}+XB_{i+1\,j})X+(YB_{i\,j+1}+XB_{i+1\,j+1})Y & (YB_{i\,j}+XB_{i+1\,j})Y+(YB_{i\,j+1}+XB_{i+1\,j+1})X 
\end{matrix}\right).
\end{align*}
}}
Now, since the blocks $B_{i\,j}$ all have constant sum of rows and columns, we have
$$J_tB_{i\,j}=B_{i\,j}J_t=\alpha_{i\,j} J_t\implies YB_{i\,j}=B_{i\,j}Y=\frac{\alpha_{i\,j}}{t}J_t,\; XB_{i\,j}=B_{i\,j}X=B_{i\,j}-\frac{\alpha_{i\,j}}{t}J_t.$$

Additionally, observe that $$X^2=X,\; Y^2=Y,\; XY=YX=O.$$

From this, we can obtain through a straightforward calculation that, for any odd $i,j \in [2m]$, we have
\begin{alignat*}{2}
B'_{i\,j} &= B_{i\,j} - \frac{\alpha_{i,j}}t J_t + \frac{\alpha_{i+1,j+1}}t J_t &\;=\;& B_{i\,j},\\
B'_{i+1\,j} &= B_{i+1\,j} - \frac{\alpha_{i,j+1}}t J_t + \frac{\alpha_{i+1,j}}t J_t  &\;=\;& B_{i+1\,j},\\
B'_{i\,j+1} &= B_{i\,j+1} - \frac{\alpha_{i+1,j}}t J_t + \frac{\alpha_{i,j+1}}t J_t  &\;=\;& B_{i\,j+1},\\
B'_{i+1\,j+1} &= B_{i+1\,j+1} - \frac{\alpha_{i+1,j+1}}t J_t + \frac{\alpha_{i,j}}t J_t  &\;=\;& B_{i+1\,j+1}.
\end{alignat*}
Next, for an odd $i\in[2m]$ we have $B_i'=XB_i+YB_{i+1}$ and $B_{i+1}'=YB_i+XB_{i+1}$. There are two possible cases.

The first case is when $B_i$ is an all-ones block while $B_{i+1}=O$, so that $\Gamma(D)\cap(C_i\cup C_{i+1})=C_i$. Then, since $J_tB_i=tB_i$, we have $B'_i=O$ and $B'_{i+1}=B_i$, meaning that the adjacency of all $k$-sets of the form $\{v,D\}$ for $v\in C_i\cup C_{i+1}$ was switched. This is consistent with the switching used to construct $H$.

The second case is when $B_i$ and $B_{i+1}$ both have exactly $r$ all-ones rows, meaning that $|\Gamma(D)\cap C_i|=|\Gamma(D)\cap C_{i+1}|=r$. Then $J_tB_i=J_tB_{i+1}=rB$ where $B$ is an all-ones block of size $t\times (k-1)$, and from this we can derive $B'_i=B_i$ and $B'_{i+1}=B_{i+1}$.

Combining the observations above we can conclude that $A'$ is the adjacency matrix of $H$.
\end{proof}

Note that the $\frac1{k-1}$ factor in the definition of the adjacency matrix $A$ has no bearing on the argument of the proof, which implies that the above switching is also applicable when using the adjacency matrix definition from \cite{FL1996}.

\begin{remark}
Let $G$ be a hypergraph that satisfies the conditions of Theorem~\ref{t.matrix} with a partition of vertices $C_1\cup C_2\cup D$ and $\Gamma(D)\cap(C_1\cup C_2)=C_1$. Then the application of the switching described in Theorem~\ref{t.matrix} is equivalent to applying the switching described by Sarkar and Banerjee in~\cite{SB2020}. The partition of the vertices in that case would be into two subsets $C:=C_1\cup C_2$ and $D$.
\end{remark}

We end up with an example that illustrates the use of Theorem \ref{t.matrix}. Note that the following cospectral pair of uniform hypergraphs cannot be obtained using the switching for hypergraphs using matrix representation from~\cite{SB2020}.

\begin{example}
Let $G$ be a $3$-uniform hypergraph on 
$14$ vertices $u_1,u_2,\dots,u_{12},v_1,v_2$ and having edge set 
\begin{gather*}
  \{u_1,u_2,u_3\},\; \{u_1,u_4,u_5\},\; \{u_2,u_5,u_6\},\; \{u_3,u_4,u_6\},\; \{u_7,u_{10},u_{12}\},\; \{u_8,u_{10},u_{11}\},\\
  \{u_9,u_{11},u_{12}\},\;\{u_1,v_1,v_2\},\; \{u_2,v_1,v_2\},\; \{u_3,v_1,v_2\},\; \{u_7,v_1,v_2\},\; \{u_{10},v_1,v_2\}.
\end{gather*}

Consider the vertex partition $C_1:=\{u_1,u_2,u_3\}$, $C_2:=\{u_4,u_5,u_6\}$, $C_3:=\{u_7,u_8,u_9\}$, $C_4:=\{u_{10},u_{11},u_{12}\}$, and $D:=\{v_1,v_2\}$. Then the switching of Theorem~\ref{t.matrix} can be applied to obtain a hypergraph $H$ with edge set \begin{gather*}
  \{u_1,u_2,u_3\},\; \{u_1,u_4,u_5\},\; \{u_2,u_5,u_6\},\; \{u_3,u_4,u_6\},\; \{u_7,u_{10},u_{12}\},\; \{u_8,u_{10},u_{11}\},\\
  \{u_9,u_{11},u_{12}\},\;\{u_4,v_1,v_2\},\; \{u_5,v_1,v_2\},\; \{u_6,v_1,v_2\},\; \{u_7,v_1,v_2\},\; \{u_{10},v_1,v_2\}.
\end{gather*}
The constructed hypergraph $H$ has a subset of vertices $\{u_1,u_2,u_3\}$ of degree $2$ which themselves form an edge. There is no such subset in $G$, hence $G$ and $H$ are not isomorphic. 
Furthermore, there is no partition of the vertices of $G$ that satisfies the conditions of the GM-switching based method described in~\cite{SB2020}, meaning that this example can only be obtained using the new method from Theorem \ref{t.matrix}.
\end{example}

\subsection*{Acknowledgements}

We thank Utku Okur and Joshua Cooper for carefully reading the manuscript and pointing out the distinction between cospectrality and $E$-cospectrality of tensors. We also thank the anonymous referees for their useful feedback. Aida Abiad is partially supported by FWO (Research Foundation Flanders) via the grant 1285921N. This research is supported by NWO (Dutch Research Council) via an ENW-KLEIN-1 project (OCENW.KLEIN.475).


\bigskip

\end{document}